\newtheorem{definition}{Definition}[section]
\newtheorem{proposition}{Proposition}[section]
\newtheorem{lemma}{Lemma}[section]
\newtheorem{conjecture}{Conjecture}[section]
\newtheorem*{theorem*}{Theorem}
\newtheorem*{assumption*}{Assumption}
\newtheorem*{conjecture*}{Conjecture}
\theoremstyle{remark}
\newtheorem{remark}{Remark}[section]
\newtheorem*{remark*}{Remark}
\numberwithin{equation}{section}
\DeclareMathOperator{\GL}{GL}
\DeclareMathOperator{\SL}{SL}
\DeclareMathOperator{\Ann}{Ann}
\DeclareMathOperator{\depth}{depth}
\DeclareMathOperator{\id}{id}
\DeclareMathOperator{\Spec}{Spec}
\DeclareMathOperator{\diag}{diag}
\newcommand{\utau}{\uline{\tau}}
\subjclass{11F03}
\keywords{Vector-valued modular forms, Hilbert modular forms}
\begin{document}

\title{Matrix-valued Hilbert modular forms}
\author{Enrico Da Ronche}
\thanks{The author is partially supported by the GNSAGA group of INdAM}

\begin{abstract}
In this paper we generalize the notion of logarithmic vector-valued modular form in order to give a general definition of matrix-valued Hilbert modular forms. We prove that they admit unique polynomial Fourier expansions and we build examples in some particular cases.
\end{abstract}

\address{Dipartimento di Matematica, Universit\`a di Genova, Via Dodecaneso 35, 16146 Genova, Italy}
\email{enrico.daronche@edu.unige.it}

\maketitle

\tableofcontents

\section{Introduction}

In \cite{KM1} and \cite{KM2}, Knopp and Mason introduced the notion of vector-valued modular forms. In particular, given $k \in \mathbb{Q}$ and a complex representation $\rho: \SL_2(\mathbb{Z}) \rightarrow \GL_r(\mathbb{C})$, they defined a vector-valued modular form to be a tuple $(f_1, \dots, f_r)$ where $f_i: \mathcal{H} \rightarrow \mathbb{C}$ is an holomorphic function for any $i$ and such that
\begin{itemize}
\item $(c \tau + d)^{-k}\begin{pmatrix} f_1(\gamma \tau) \\ \vdots \\ f_r(\gamma \tau) \end{pmatrix} = \rho(\gamma) \begin{pmatrix} f_1(\tau) \\ \vdots \\ f_r(\tau)\end{pmatrix}$ for any $\gamma= \begin{pmatrix} a & b \\ c & d \end{pmatrix} \in \SL_2(\mathbb{Z})$;
\item $f_i$ admits a convergent holomorphic Fourier expansion
$$f_i(\tau)=\displaystyle \sum_{n \geq 0} a_{i,n} q^{n/N_i}$$
for some integers $N_i \geq 1$ and for any $i$.
\end{itemize}
Then, in \cite{KM}, they generalized it by introducing the unrestricted vector-valued (and matrix-valued) modular forms, i.e., vector-valued modular forms that satisfy only the first property listed above. They proved that they admit certain polynomial Fourier expansions, i.e, any component function has a convergent $q$-expansion of the form
$$f_i(\tau)=\displaystyle \sum_{u \in U} \displaystyle \sum_{t \in S_u} \tau^t f_{i,u,t}(\tau)$$
where
\begin{itemize}
\item $U \subset \mathbb{C}$ is finite;
\item $S_u$ is a finite subset of $\mathbb{N}$ for any $u \in U$;
\item $f_{i,u,t}(\tau)=\displaystyle \sum_{n \in \mathbb{Z}} a_{i,u,t,n} e^{2 \pi i (n+u)\tau}$ with $a_{i,u,t,n} \in \mathbb{C}$.
\end{itemize}
This led to the theory of logarithmic vector-valued modular forms. Knopp and Mason also built Poincar\'e series that are concrete examples of these functions and studied some properties of their Fourier coefficients.

The purpose of this paper is to generalize the work done by Knopp and Mason in \cite{KM} to the case of Hilbert modular forms, i.e., modular forms over a totally real field $F$. Our definition of matrix-valued Hilbert modular function follows naturally from the one of unrestricted vector-valued modular form in the classical case while the theory of polynomial Fourier expansions is more complicated. In particular, we are going to consider matrices of functions
$$\begin{pmatrix} g_{1,1}(\utau) & \dots & g_{1,c}(\utau) \\
\vdots & \ddots & \vdots \\
g_{r,1}(\utau) & \dots & g_{r,c}(\utau) \end{pmatrix}$$
where each $g_{i,j}:\mathcal{H}^n \rightarrow \mathbb{C}$ is an holomorphic function ($n=[F:\mathbb{Q}]$) and which satisfies a formal transformation law with respect to a complex representation of $H$, a finite index subgroup of $\SL_2(\mathcal{O}_F)$. We prove that all the component functions admit a unique polynomial Fourier expansion
$$g_{i,j}(\utau)=\displaystyle \sum_{u \in U}\sum_{\uline{t} \in S_u}  \utau^{\uline{t}} g_{i,j,u,\uline{t}}(\utau) $$
where
\begin{itemize}
\item $U \subset \mathbb{C}^n$ is finite;
\item $S_u$ is a finite subset of $ \mathbb{N}^n$ for any $u \in U$;
\item $\utau^{\uline{t}}:=\prod_{i=1}^n \tau_i^{t_i}$;
\item $g_{i,j,u,\uline{t}}(\utau)=\sum_{v \in \Lambda_H^*} a_{i,j,u,\uline{t},v} e^{2 \pi i ((v+u) \cdot \utau)}$ with $a_{i,j,u,\uline{t},v} \in \mathbb{C}$.
\end{itemize}
Starting from this result we define matrix-valued Hilbert modular forms and the spaces where they lie. In order to prove that our definitions are meaningful we also build examples of these functions in case of unitary representations, in particular we define Poincar\'e series
$$G(\utau)= \displaystyle \sum_{M \in \Lambda \textbackslash \Gamma_F} \rho(M)^{-1} \diag( \dots, e^{2 \pi i (\uline{\nu_j} + \uline{\mu_j}) \cdot M \utau} \cdot \mathbb{I}_{m_j}, \dots) P(M \utau)^{-1} J_{\uuline{k}}(M,\utau)^{-1}$$
(the notation is introduced in the next sections) and we prove that they are matrix-valued Hilbert modular forms.
This paper is mainly devoted to give the definitions and to study the theory of polynomial Fourier expansions but, of course, it can be considered just as a starting point for a more deep research on this family of modular forms.

\subsection{Structure of the paper}

In order to ease the reading of the paper we give a brief description of its structure. In \autoref{sec:2} we introduce the notation and the objects (coming from the theory of Hilbert modular forms) we are going to work with and we define matrix-valued Hilbert modular functions. In \autoref{sec:3} we use some linear algebra and well-known results about Fourier expansions of Hilbert modular forms to prove that these functions admits a unique polynomial Fourier expansion. The \autoref{sec:4} is devoted to the definition of matrix-valued Hilbert modular forms, to the spaces generated by these functions and to the algebraic structures they can be endowed with. In \autoref{sec:5} we build concrete examples under some hypotheses, i.e., we define Poincar\'e series and we prove that they are absolutely convergent and, in particular, matrix-valued Hilbert modular forms. Finally, \autoref{sec:A} is a remark about a well-known algebraic property of spaces of vector-valued modular forms that, maybe, can be generalized to the totally real setting.

\subsection{Notation} 
We denote by $\id_n$ the trivial complex representation of dimension $n$ and by $\mathbb{I}_n$ the identity matrix of dimension $n$.

\subsection*{Acknowledgements}
I would like to thank Ignacio Mu\~{n}oz Jim\'enez for several helpful discussions.

\section{Matrix-valued Hilbert modular functions} \label{sec:2}

We introduce the notion of matrix-valued Hilbert modular forms. Let $F$ be a totally real field and let $n:=[F:\mathbb{Q}]$ and $\Gamma_F:=\SL_2(\mathcal{O}_F)$. Let us denote by
$$\iota_F:F \hookrightarrow \mathbb{R}^n, \: \iota_F(f):=(\sigma_1(f), \dots,\sigma_n(f))$$
the obvious injective map, where $\sigma_1,\dots,\sigma_n$ are the embeddings of $F$ in $\mathbb{R}$. Let $r,c \geq 1$ be integers and denote by $M_{r,c}(\mathbb{C})$ the space of $r \times c$ complex-valued matrices. We also fix a finite index subgroup $H \leq \Gamma_F$ and a complex representation $\rho:H \rightarrow \GL_r(\mathbb{C})$. For any function $G:\mathcal{H}^n \rightarrow M_{r,c}(\mathbb{C})$, any $\gamma = \begin{pmatrix} a & b \\ c & d \end{pmatrix} \in H$ and any matrix $\uuline{k} \in M_{c,n}(\mathbb{Z})$ we define the automorphy factor as
$$J_{\uuline{k}}(\gamma,\utau):=\begin{pmatrix} j_{\uline{k_1}}(\gamma,\utau) & 0 & \dots &0 \\
0 & \ddots & \ddots & \vdots\\
\vdots & \ddots & \ddots & 0\\ 0 & \cdots & 0 & j_{\uline{k_c}}(\gamma, \utau)  \end{pmatrix}$$
where $\uline{k_i}$ is the $i$-th row of $\uuline{k}$ and
$$j_{\uline{k_i}}(\gamma,\uline{\tau}):=\displaystyle \prod_{j=1}^n (\sigma_j(c) \tau_j+ \sigma_j(d))^{k_{i,j}}.$$
We also set
$$(G|_{\uuline{k}} \gamma )(\uline{\tau}):=G(\gamma \cdot \uline{\tau}) J_{\uuline{k}}(\gamma,\uline{\tau})^{-1}. $$

\begin{definition}
Let $\uuline{k} \in M_{c,n}(\mathbb{Z})$ and let $\rho:H \rightarrow \GL_r(\mathbb{C})$ be a complex representation. A \textbf{matrix-valued Hilbert modular function} of weight $\uuline{k}$ with respect to $\rho$ is a holomorphic function
$$G:\mathcal{H}^n \rightarrow M_{r,c}(\mathbb{C})$$
such that $(G|_{\uuline{k}} \gamma)(\uline{\tau})=\rho(\gamma) G(\uline{\tau})$ for any $\gamma \in H$.

\end{definition}

We will use the following notation:
$$G(\uline{\tau})=\begin{pmatrix} g_{1,1}(\utau) & \dots & g_{1,c}(\utau) \\
\vdots & \ddots & \vdots \\
g_{r,1}(\utau) & \dots & g_{r,c}(\utau) \end{pmatrix}.$$

We say that $\uuline{k}=\begin{pmatrix} \uline{k_1} \\ \vdots \\ \uline{k_c} \end{pmatrix}$ is the matrix of weights of $G$. 

\section{Polynomial Fourier expansions} \label{sec:3}

In order to define matrix-valued Hilbert modular forms we need to define polynomial Fourier expansions of matrix-valued Hilbert modular functions. First of all, we give a definition and a preliminary result coming from linear algebra.

\begin{definition}
A family of matrices $A_1, \dots ,A_n \in M_r(\mathbb{C})$ is simultaneously block-triangularizable with same dimensions (SBTSD) if there exists a basis of $\mathbb{C}^r$ such that any matrix is in the form
$$A_i=\begin{pmatrix} A_{i,1} & 0 &0 \\ 0 & \ddots & 0 \\ 0 & 0 & A_{i,k} \end{pmatrix}$$
where $k \geq 1$ is an integer and $A_{i,j}$ is an upper-triangular square block with equal elements on the diagonal and such that $\dim A_{i,j}=\dim A_{i',j}$ for any $1 \leq i,i' \leq n$.
\end{definition}

\begin{proposition} \label{cm}
Any family $A_1, \dots ,A_n \in M_r(\mathbb{C})$ of commuting matrices, i.e. $A_iA_j=A_jA_i$ for any $i,j$, is SBTSD.
\end{proposition}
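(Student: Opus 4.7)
The plan is to reduce to two classical inputs: (i) the primary decomposition theorem, applied simultaneously to all the $A_i$, and (ii) the fact that a family of commuting matrices over an algebraically closed field can be simultaneously upper-triangularized in a common basis.

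First I would produce the block-diagonal structure by decomposing $\mathbb{C}^r$ into common generalized eigenspaces. For each tuple $\underline{\lambda}=(\lambda_1,\dots,\lambda_n)\in\mathbb{C}^n$ define
$$V_{\underline{\lambda}} := \bigcap_{i=1}^{n} \ker (A_i - \lambda_i \mathbb{I}_r)^{r}.$$
Since $A_iA_j=A_jA_i$, each $A_j$ preserves $\ker(A_i-\lambda_i\mathbb{I}_r)^r$, hence every $V_{\underline{\lambda}}$ is invariant under all the $A_i$. Applying primary decomposition iteratively, first to $A_1$, then restricting to each factor and decomposing with respect to $A_2$, and so on, gives
$$\mathbb{C}^r = \bigoplus_{\underline{\lambda}} V_{\underline{\lambda}},$$
a finite direct sum. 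Choosing a basis of $\mathbb{C}^r$ adapted to this decomposition puts every $A_i$ into block-diagonal form with one block $A_{i,\underline{\lambda}}$ for each $\underline{\lambda}$, and the block sizes $\dim V_{\underline{\lambda}}$ do not depend on $i$. This yields the required compatibility of block dimensions across the family.

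Next I would fix one piece $V_{\underline{\lambda}}$ and make every block $A_{i,\underline{\lambda}}$ upper-triangular with constant diagonal $\lambda_i$. By construction $A_i|_{V_{\underline{\lambda}}} = \lambda_i \mathbb{I} + N_i$ where $N_i$ is nilpotent, and the $N_i$ still commute. The classical theorem on simultaneous triangularization of commuting operators (proved by induction on dimension: commuting matrices over $\mathbb{C}$ share a common eigenvector because each eigenspace of one is invariant under the others, and then one passes to the quotient) gives a basis of $V_{\underline{\lambda}}$ in which all $N_i$ are strictly upper-triangular, equivalently in which each $A_{i,\underline{\lambda}}$ is upper-triangular with every diagonal entry equal to $\lambda_i$. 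Concatenating these bases over all $\underline{\lambda}$ produces the basis of $\mathbb{C}^r$ demanded by the definition.

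The main obstacle is not conceptual but bookkeeping: one has to justify that the primary decompositions with respect to the different $A_i$ can be combined into a single simultaneous decomposition, which is exactly where commutativity is used, and one has to invoke (or briefly reprove) the common-eigenvector lemma that underlies simultaneous triangularization. Both ingredients are standard and together yield the stated SBTSD property.
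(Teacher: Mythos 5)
Your proposal is correct and follows essentially the same route as the paper: an iterated primary/Jordan decomposition into common generalized eigenspaces (using commutativity for invariance), followed by simultaneous triangularization of the commuting restrictions on each piece. The only cosmetic difference is that you package the result of the iteration as the intersections $V_{\underline{\lambda}}=\bigcap_i \ker(A_i-\lambda_i\mathbb{I}_r)^r$, while the paper leaves it as the iteratively refined summands $W_j$.
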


\begin{proof}
First of all let us fix the decomposition of $\mathbb{C}^r$ given by the Jordan canonical form of $A_1$, i.e.
$$\mathbb{C}^r=\displaystyle \bigoplus_{\lambda \in \Spec(A_1)} \ker (A_1-\lambda)^r.$$
Then, we notice that for any $\lambda \in \Spec(A_1)$ and any $i$ we have that $V_{1,\lambda}:=\ker (A_1-\lambda)^r$ is stable under $A_i$ since
$$(A_1 - \lambda)^r A_i=A_i (A_1- \lambda)^r.$$
Then we can put $A_2 \big|_{V_{1,\lambda}}$ in Jordan canonical form for any $\lambda \in \Spec(A_1)$ and by iterating this process a finite number of times we can get a decomposition
$$\mathbb{C}^r=\displaystyle \bigoplus_{j=1}^t W_j$$
where, for any $i$ and any $j$ there exists $\lambda \in \Spec(A_i)$ such that $(A_i \big|_{W_j}-\lambda)^r=0$. Finally, we can conclude since the family $\{ A_i \big|_{W_j}\}_{j=1, \dots, t}$ consists of commuting matrices and so, in particular, they are simultaneously triangularizable.
\end{proof}

Let $H$ be a finite index subgroup of $\Gamma_F$. We set
$$S_H:=\left\{ a \in \mathcal{O}_F: \begin{pmatrix} 1 & a \\ 0 & 1 \end{pmatrix}  \in H \right\}$$
and we denote by $\Lambda_H:=\iota_F(S_H)$ the translation lattice of $H$. We also denote by
$$\Lambda_H^*:=\left\{ v \in \mathbb{R}^n: v \cdot u \in \mathbb{Z} \: \text{for any} \: u \in \Lambda_H \right\}$$
the dual lattice of $\Lambda_H$. We have $S_H \cong \mathbb{Z}^n$ and so we can fix a $\mathbb{Z}$-basis $a_1,\dots,a_n$ of $S_H$. We denote by $v_i:=\iota_F(a_i)$ and by $A_i:=\rho \left( T_i \right)$ where $T_i:=\begin{pmatrix} 1 & a_i \\ 0 & 1 \end{pmatrix}$. Obviously, $v_1,\dots,v_n$ is a $\mathbb{Z}$-basis of $\Lambda_H$ and we denote by $M$ the matrix whose columns are $v_1,\dots,v_n$.

\begin{lemma} \label{pe}
Let $f:\mathcal{H}^n \rightarrow \mathbb{C}$ be an holomorphic function such that
$$f(\utau+v_i)=\lambda_i f(\utau)$$
for any $i=1,\dots,n$, where the $\lambda_i=e^{2 \pi i \mu_i}$ are non-zero complex numbers. Then it admits a Fourier expansion of the form
$$f(\utau)=\displaystyle \sum_{v \in \Lambda_H^*} a_v e^{2 \pi i ((v+\mu \cdot M^{-1}) \cdot \utau)}$$
where $\mu:=(\mu_1,\dots,\mu_n)$.
\end{lemma}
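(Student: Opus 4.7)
The plan is to untwist $f$ so that it becomes genuinely $\Lambda_H$-periodic, apply the standard Fourier expansion of a holomorphic lattice-periodic function on $\mathcal{H}^n$, and then twist back. Writing $\mu:=(\mu_1,\dots,\mu_n)$ as a row vector, I would set
$$h(\utau):=f(\utau)\cdot e^{-2\pi i (\mu M^{-1})\cdot\utau},$$
which is a holomorphic function on $\mathcal{H}^n$.

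The key identity is that the columns of $M$ are $v_1,\dots,v_n$, so $M^{-1}v_i=e_i$ and hence $(\mu M^{-1})\cdot v_i=\mu_i$. Combined with $\lambda_i=e^{2\pi i\mu_i}$ and the hypothesis on $f$, this gives
$$h(\utau+v_i)=\lambda_i f(\utau)\cdot e^{-2\pi i\mu_i}e^{-2\pi i(\mu M^{-1})\cdot\utau}=h(\utau),$$
so $h$ is invariant under translation by every generator of $\Lambda_H$, and hence by the whole lattice.

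At this point I would invoke the classical fact that a holomorphic function on $\mathcal{H}^n$ periodic under a rank-$n$ lattice $\Lambda\subset\mathbb{R}^n$ admits a convergent Fourier expansion $\sum_{v\in\Lambda^*}a_v e^{2\pi i v\cdot\utau}$ indexed by the dual lattice. This is the same fact that underlies the standard $q$-expansion of Hilbert modular forms and can be obtained, for instance, by applying coordinatewise the classical one-variable Fourier expansion after the linear change of variables sending $\Lambda_H$ to $\mathbb{Z}^n$. Multiplying the resulting expansion of $h$ back by $e^{2\pi i(\mu M^{-1})\cdot\utau}$ yields the claimed expansion of $f$.

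I do not expect a serious obstacle here: the content of the lemma is essentially bookkeeping, and the only step requiring care is the identification $(\mu M^{-1})\cdot v_i=\mu_i$, which matches the multipliers $\lambda_i$ to the lattice translations. Once this is in place, both the periodicity of $h$ and the shape of the exponents $v+\mu M^{-1}$ in the final expansion follow immediately from the classical statement.
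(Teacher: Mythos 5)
Your proposal is correct and follows essentially the same route as the paper: the paper also untwists $f$ by the factor $\exp(-2\pi i\,(\mu M^{-1})\cdot\utau)$, verifies $\Lambda_H$-periodicity of the resulting function using exactly the identity $(\mu M^{-1})\cdot v_i=\mu_i$, invokes the standard Fourier expansion for lattice-periodic holomorphic functions on $\mathcal{H}^n$, and twists back.
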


\begin{proof}
We set
$$g(\utau):=f(\utau) \cdot \exp \left( -2 \pi i(\mu_1,\dots,\mu_n) \cdot M^{-1} \cdot \utau \right)$$
and we notice that
\begin{flalign*}
&& g(\uline{\tau}+v_t) &=f(\uline{\tau}+v_t) \cdot \exp \left(-2 \pi i (\mu_1,...,\mu_n) \cdot M^{-1} \cdot (\uline{\tau}+v_t) \right) &\\
&& &=\lambda_t f(\uline{\tau}) \cdot \exp \left( -2 \pi i (\mu_1,...,\mu_n) \cdot M^{-1} \cdot \uline{\tau} - 2 \pi i \mu_t \right) &\\
&& &=\lambda_t f(\uline{\tau}) \cdot \exp \left( -2 \pi i(\mu_1,...,\mu_n) \cdot M^{-1} \cdot \uline{\tau} \right) \cdot \lambda_t^{-1} =g(\uline{\tau}).
\end{flalign*}
Then by \cite[Lemma 4.1]{F} $g$ admits a $q$-expansion of the form
$$g(\uline{\tau})= \displaystyle \sum_{v \in \Lambda_H^*} a_v e^{2 \pi i (v \cdot \uline{\tau})}$$
and so we can write $f$ in the form
$$f(\uline{\tau})=\displaystyle \sum_{v \in \Lambda_H^*} a_v e^{2 \pi i (v \cdot \uline{\tau})} \cdot e^{2 \pi i \mu \cdot M^{-1} \cdot \uline{\tau}}=\displaystyle \sum_{v \in \Lambda_H^*} a_v e^{2 \pi i ((v+\mu \cdot M^{-1})\cdot \uline{\tau})}$$
as stated.
\end{proof}

Let $W_j:=\langle g_{1,j}(\utau), \dots , g_{r,j}(\utau) \rangle_\mathbb{C}$ for any $j=1, \dots ,c$. For any $i$ and any $j$ we set a linear operator $N_{i,j}$ on $W_j$ by setting:
$$N_{i,j} (c_1 g_{1,j}(\utau) + \dots + c_r g_{r,j}(\utau)):=(c_1, \dots , c_r) \cdot A_i \cdot \begin{pmatrix} g_{1,j}(\utau) \\ \vdots \\ g_{r,j}(\utau) \end{pmatrix}$$
where $c_1, \dots, c_r \in \mathbb{C}$. Since $G$ is a matrix-valued Hilbert modular function, we have
$$\begin{pmatrix} g_{1,j}(\utau + v_i) \\ \vdots \\ g_{r,j}(\utau + v_i) \end{pmatrix}=A_i \begin{pmatrix} g_{1,j}(\utau) \\ \vdots \\ g_{r,j}(\utau) \end{pmatrix}$$
and, thanks to this, the operators $N_{i,j}$ are well-defined. By abuse of notation, we denote by $N_{i,j}$ also the matrix of the associated operator for a fixed basis of $W_j$. It is immediate to see that they commute with each other and they are invertible. By Proposition \ref{cm} the family $\{N_{i,j} \}_{i=1, \dots, n}$ is SBTSD. In particular, we can fix basis $\{l_{1,j}(\utau), \dots , l_{s(j),j}(\utau) \}$ of $W_j$ such that the matrices are in the desired form. We also notice that
$$\begin{pmatrix}l_{1,j}(\utau+v_i) \\ \vdots \\ l_{s(j),j}(\utau + v_i) \end{pmatrix} = N_{i,j}^T \begin{pmatrix} l_{1,j}(\utau) \\ \vdots \\ l_{s(j),j}(\utau) \end{pmatrix}.$$

\begin{proposition}
Let $j=1, \dots, n$. There exists a matrix $M_j(\uline{\tau}) \in M_{s(j),s(j)}(\mathbb{C}[\utau])$ such that
$$\begin{pmatrix} l_{1,j}(\utau) \\ \vdots \\ l_{s(j),j}(\utau) \end{pmatrix} = M_j(\utau) \begin{pmatrix} h_{1,j}(\utau) \\ \vdots \\ h_{s(j),j}(\utau) \end{pmatrix}$$
where, for any $i$, there exists $u \in \mathbb{C}^n$ such that
$$h_{i,j}(\utau)=\displaystyle \sum_{v \in \Lambda_H^*} a_{i,j,v} e^{2 \pi i ((v+u) \cdot \utau)}.$$
\end{proposition}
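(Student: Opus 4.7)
The plan is to work block-by-block using the SBTSD decomposition from Proposition \ref{cm}, and to build a polynomial change-of-basis matrix via a matrix exponential. Fix a block of size $d$ inside $N_{i,j}$, indexed by $b$; after restricting to the block and relabelling the corresponding basis vectors as $l_1,\dots,l_d$, the transformation law reads $\mathbf{l}(\utau + v_i) = (\lambda_{i,b}\mathbb{I}_d + U_{i,b}^T)\mathbf{l}(\utau)$, where $\mathbf{l} := (l_1,\dots,l_d)^T$, $U_{i,b}^T$ is strictly lower-triangular, and $\lambda_{i,b}$ is the constant diagonal eigenvalue of the block. Since the $N_{i,j}$ pairwise commute, so do the $U_{i,b}^T$, and hence so do the nilpotent matrices $L_{i,b}^T := \log(\mathbb{I}_d + \lambda_{i,b}^{-1}U_{i,b}^T)$ (the logarithmic series terminates by nilpotency).

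Since $v_1,\dots,v_n$ is a basis of $\mathbb{R}^n$, the linear system $\sum_{k=1}^n (v_i)_k B_k = L_{i,b}^T$ (for $i=1,\dots,n$) has a unique solution $(B_1,\dots,B_n)$. Each $B_k$ is a linear combination of commuting strictly lower-triangular matrices, hence is itself strictly lower-triangular (thus nilpotent), and the $B_k$ pairwise commute. Set
$$M_{j,b}(\utau) := \exp\Bigl(\sum_{k=1}^n \tau_k B_k\Bigr) = \prod_{k=1}^n \exp(\tau_k B_k).$$
Each factor $\exp(\tau_k B_k)$ is polynomial in $\tau_k$ by nilpotency of $B_k$, so $M_{j,b}(\utau) \in M_d(\mathbb{C}[\utau])$ is a unipotent lower-triangular polynomial matrix, invertible for every $\utau$. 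Commutativity of the $B_k$ together with the identity $\exp(L_{i,b}^T) = \mathbb{I}_d + \lambda_{i,b}^{-1}U_{i,b}^T$ then yield the intertwining relation
$$M_{j,b}(\utau + v_i) = \exp(L_{i,b}^T)\,M_{j,b}(\utau) = (\mathbb{I}_d + \lambda_{i,b}^{-1}U_{i,b}^T)\,M_{j,b}(\utau).$$

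Setting $\mathbf{h}(\utau) := M_{j,b}(\utau)^{-1}\mathbf{l}(\utau)$, the intertwining relation combined with the transformation law for $\mathbf{l}$ gives $\mathbf{h}(\utau + v_i) = \lambda_{i,b}\mathbf{h}(\utau)$ for every $i$; thus each component $h_p$ satisfies the hypothesis of Lemma \ref{pe} with $\lambda_i = \lambda_{i,b}$, and admits a Fourier expansion of the claimed form with common value $u = \mu_b \cdot M^{-1}$ for any choice of $\mu_{i,b}$ satisfying $e^{2\pi i\mu_{i,b}} = \lambda_{i,b}$ (here $M$ denotes the basis matrix preceding Lemma \ref{pe}, not to be confused with $M_{j,b}$). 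Assembling the block matrices $M_{j,b}(\utau)$ into a block-diagonal matrix $M_j(\utau) \in M_{s(j),s(j)}(\mathbb{C}[\utau])$ then finishes the proof. The main technical point I anticipate is verifying the intertwining identity for $M_{j,b}(\utau + v_i)$ and the commutativity of all the matrices involved, but this reduces to routine calculations with commuting nilpotents once the change of basis from the $L_{i,b}^T$ to the $B_k$ is in place.
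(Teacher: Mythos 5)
Your proposal is correct and follows essentially the same route as the paper: both arguments work block-by-block on the SBTSD decomposition, take nilpotent logarithms of the unipotent parts $N_i^T/\lambda_i$, and form a polynomial matrix exponential that intertwines the translation action so that the transformed functions become genuine eigenfunctions to which Lemma \ref{pe} applies. Your step of solving $\sum_k (v_i)_k B_k = L_{i,b}^T$ is exactly the paper's use of the coordinates $(M^{-1}\utau)_i$ in the exponent, so the two constructions coincide up to the inversion convention.
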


\begin{proof}
We just need to prove the statement for triangular blocks of the matrices, so we assume that
$$N_i=\begin{pmatrix} 
\lambda_i & a^i_{1,2} & \dots & a^i_{1,s} \\
0 & \ddots & \ddots & \vdots\\
\vdots & \ddots & \ddots  & a^i_{s-1,s}\\
0 & \dots &0 & \lambda_i
\end{pmatrix}$$
for any $i=1,\dots,n$ in the basis $\{l_1(\uline{\tau}),\dots,l_s(\uline{\tau}) \}$ (we omit the index $j$ from the notation).
Since $N_i$ is invertible for any $i$ we can set $H_i:= N_i^T /\lambda_i$ and $S_i:=H_i^{-1}$ for any $i=1,\dots,n$. We fix a logarithm for any matrix $S_i$ which is lower-triangular with entries equal to $0$ on the diagonal (it exists since the $S_i$ are invertible and lower-triangular with entries equal to $1$ on the diagonal) and we set
$$P(\uline{\tau}):=\exp \left( \displaystyle \sum_{i=1}^{n} (M^{-1} \cdot \uline{\tau})_i \log(S_i) \right)$$
and
$$\begin{pmatrix} h_1(\uline{\tau}) \\ \vdots \\ h_s(\uline{\tau})\end{pmatrix}:= P(\uline{\tau}) \cdot \begin{pmatrix} l_1(\uline{\tau}) \\ \vdots \\ l_s(\uline{\tau})\end{pmatrix}.$$
We have $P(\utau) \in \GL_s(\mathbb{C}[\utau])$ since it is the exponential of a lower-triangular matrix in $M_{s,s}(\mathbb{C}[\utau])$ with entries equal to $0$ on the diagonal (in particular, it has entries equal to $1$ on the diagonal).
We observe that
$$P(\uline{\tau}+v_t)=\exp \left( \displaystyle \sum_{i=1}^{n} (M^{-1} \cdot \uline{\tau})_i \log(S_i) + \log(S_t) \right)=P(\uline{\tau}) \cdot \exp (\log(S_t))=P(\uline{\tau}) \cdot S_t$$
and we compute
\begin{flalign*}
&& \begin{pmatrix} h_1(\uline{\tau}+v_t) \\ \vdots \\ h_s(\uline{\tau}+v_t)\end{pmatrix} &= P(\uline{\tau}+v_t) \cdot \begin{pmatrix} l_1(\uline{\tau}+v_t) \\ \vdots \\ l_s(\uline{\tau}+v_t)\end{pmatrix} = P(\uline{\tau}) \cdot S_t \cdot N_t^T \cdot \begin{pmatrix} l_1(\uline{\tau}) \\ \vdots \\ l_s(\uline{\tau})\end{pmatrix} &\\
&& &= P(\uline{\tau}) \cdot H_t^{-1} \cdot \lambda_t H_t \cdot \begin{pmatrix} l_1(\uline{\tau}) \\ \vdots \\ l_s(\uline{\tau})\end{pmatrix}= \lambda_t \begin{pmatrix} h_1(\uline{\tau}) \\ \vdots \\ h_s(\uline{\tau})\end{pmatrix}.
\end{flalign*}
By Lemma \ref{pe} we know that $h_i$ admits a Fourier expansion of the form
$$h_i(\uline{\tau})=\displaystyle \sum_{v \in \Lambda_H^*} a_v e^{2 \pi i ((v+ \mu \cdot M^{-1}) \cdot \utau)}$$
and we can conclude by observing that
$$\begin{pmatrix} l_1(\uline{\tau}) \\ \vdots \\ l_s(\uline{\tau}) \end{pmatrix}=P(\uline{\tau})^{-1} \cdot \begin{pmatrix} h_1(\uline{\tau}) \\ \vdots \\ h_s(\uline{\tau}) \end{pmatrix}.$$
\end{proof}

Finally, we have that for any integer $1 \leq j \leq c$ there exists $U \subset \mathbb{C}^n$ finite such that for any integer $1 \leq i \leq r$ the function $g_{i,j}(\utau)$ can be written in the form
$$g_{i,j}(\utau)=\displaystyle \sum_{u \in U}\sum_{\uline{t} \in S_u}  \utau^{\uline{t}} g_{i,j,u,\uline{t}}(\utau) $$
where
\begin{itemize}
\item $S_u$ is a finite subset of $ \mathbb{N}^n$ for any $u \in U$;
\item $\utau^{\uline{t}}:=\prod_{i=1}^n \tau_i^{t_i}$;
\item $g_{i,j,u,\uline{t}}(\utau)=\sum_{v \in \Lambda_H^*} a_{i,j,u,\uline{t},v} e^{2 \pi i ((v+u) \cdot \utau)}$ with $a_{i,j,u,\uline{t},v} \in \mathbb{C}$.
\end{itemize}
We call it the \textbf{polynomial Fourier expansion} of $g_{i,j}$.

\begin{proposition}
The polynomial Fourier expansion is unique.
\end{proposition}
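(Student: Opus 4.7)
My approach is to subtract two candidate expansions and show that the difference must vanish term-by-term. Suppose $g_{i,j}$ admits two polynomial Fourier expansions; taking their difference gives an identity
$$\sum_{u \in U} \sum_{\uline{t} \in S_u} \utau^{\uline{t}} h_{u,\uline{t}}(\utau) = 0,$$
where each $h_{u,\uline{t}}$ has a Fourier expansion of the form $\sum_{v \in \Lambda_H^*} b_{u,\uline{t},v} e^{2 \pi i (v+u) \cdot \utau}$. Since for $u, u' \in \mathbb{C}^n$ with $u - u' \in \Lambda_H^*$ the families $\{v + u : v \in \Lambda_H^*\}$ and $\{v + u' : v \in \Lambda_H^*\}$ coincide, I may relabel and combine terms so that the elements of $U$ lie in pairwise distinct cosets of $\Lambda_H^*$ in $\mathbb{C}^n$. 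The target becomes to show that each $h_{u,\uline{t}} \equiv 0$ under this normalization; uniqueness of the data $U$, $S_u$ and $g_{i,j,u,\uline{t}}$ then follows up to the inherent ambiguity in the choice of coset representatives.

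The main tool is translation by the lattice $\Lambda_H$. For any $w \in \Lambda_H$ and any $v \in \Lambda_H^*$ we have $v \cdot w \in \mathbb{Z}$, hence $h_{u,\uline{t}}(\utau + w) = e^{2 \pi i u \cdot w} h_{u,\uline{t}}(\utau)$. Substituting $\utau + w$ into the identity yields
$$\sum_{u \in U} e^{2 \pi i u \cdot w} P_u(\utau,w) = 0 \qquad \text{for all } w \in \Lambda_H,$$
where I have set $P_u(\utau,w) := \sum_{\uline{t} \in S_u} (\utau + w)^{\uline{t}} h_{u,\uline{t}}(\utau)$, which is a polynomial in $w$ with $\utau$-dependent coefficients. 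Since the $u$'s belong to pairwise distinct cosets of $\Lambda_H^*$, the characters $w \mapsto e^{2 \pi i u \cdot w}$ are pairwise distinct on $\Lambda_H \cong \mathbb{Z}^n$, so I would invoke the classical linear independence of distinct characters of $\mathbb{Z}^n$ with polynomial coefficients. This is proved by a shift-operator induction: along each generator $v_k$ of $\Lambda_H$, iterate the operator $f(w) \mapsto f(w + v_k) - e^{2 \pi i u_0 \cdot v_k} f(w)$ enough times to annihilate the $u_0$-term, while the remaining summands keep their polynomial degrees in $w$ (because the scalar factors $e^{2\pi i u\cdot v_k} - e^{2\pi i u_0\cdot v_k}$ are non-zero for $u\neq u_0$). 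Peeling off each $u_0$ in turn forces $P_u(\utau, w) = 0$ for every $w \in \Lambda_H$, every $u \in U$ and every $\utau \in \mathcal{H}^n$.

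To conclude, since $P_u(\utau, \cdot)$ is a polynomial in $w$ vanishing on the full-rank lattice $\Lambda_H \subset \mathbb{R}^n$, it vanishes identically as a polynomial. I then pick a maximal element $\uline{t}^* \in S_u$ for the componentwise partial order on $\mathbb{N}^n$: expanding $(\utau + w)^{\uline{t}}$ by the multinomial theorem, the coefficient of $w^{\uline{t}^*}$ in $P_u(\utau,w)$ is exactly $h_{u,\uline{t}^*}(\utau)$, since by maximality no other $\uline{t} \in S_u$ contributes to this monomial. Hence $h_{u,\uline{t}^*} \equiv 0$; removing $\uline{t}^*$ from $S_u$ and iterating on each remaining maximal element exhausts $S_u$, yielding $h_{u,\uline{t}} \equiv 0$ for every $\uline{t}$ and every $u$. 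The main obstacle is the character-independence step in the middle paragraph: the statement is classical over $\mathbb{Z}$, but one must run the shift-operator induction carefully along each generator of $\Lambda_H$, keeping track of the polynomial degrees while progressively isolating the target character.
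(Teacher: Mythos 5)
Your proof is correct, and while the first half runs parallel to the paper's argument, the second half takes a genuinely different and cleaner route. Both proofs begin the same way: reduce to a single vanishing expansion, normalize $U$ so that distinct $u$'s lie in distinct cosets of $\Lambda_H^*$, and separate the characters $w \mapsto e^{2\pi i u\cdot w}$ on $\Lambda_H$ by iterated shift operators along the generators $v_k$ (your ``character independence with polynomial coefficients'' is exactly the paper's system of weak derivatives $d_{i,u}$, and your degree-bookkeeping caveat is handled correctly by the standard induction on $|U|$ you sketch). Where you diverge is in extracting the individual $h_{u,\uline{t}}$ once a single character remains. The paper applies further difference operators along vectors $h_i e_i$ that it claims lie in $\Lambda_H$; note that for $n>1$ this claim is problematic, since $\iota_F(a)=(\sigma_1(a),\dots,\sigma_n(a))$ has a zero coordinate only when $a=0$, so $\Lambda_H$ contains no nonzero multiple of a standard basis vector. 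You instead observe that $P_u(\utau,\cdot)$ is a polynomial in $w$ vanishing on the full-rank lattice $\Lambda_H$, hence vanishing identically, and then read off $h_{u,\uline{t}^*}$ as the coefficient of $w^{\uline{t}^*}$ for $\uline{t}^*$ maximal in the componentwise order; this sidesteps the issue entirely and also avoids the paper's somewhat ambiguous partial order on the exponents $\uline{t}$. The only small omission is the final (trivial) step, present in the paper, that $h_{u,\uline{t}}\equiv 0$ forces all its Fourier coefficients to vanish, which is needed to conclude uniqueness of the coefficients $a_{u,\uline{t},v}$ themselves.
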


\begin{proof}
In order to ease the reading, we fix $i$ and $j$ and we remove them from the notation. We assume
$$g(\utau)=\displaystyle \sum_{u \in U} \displaystyle \sum_{\uline{t} \in S_u} \utau^{\uline{t}} g_{u,\uline{t}}(\utau) \equiv 0$$
and we set
$$g_u(\utau):=\displaystyle \sum_{\uline{t} \in S_u} \utau^{\uline{t}} g_{u,\uline{t}}(\utau).$$
If $g_{u,\uline{t}}(\utau) \equiv 0$ we have
$$\sum_{v \in \Lambda_H^*}a_{u,\uline{t},v}e^{2 \pi i v \cdot \utau} \cdot e^{2 \pi i u \cdot \utau} \equiv 0 \Rightarrow \sum_{v \in \Lambda_H^*}a_{u,\uline{t},v}e^{2 \pi i v \cdot \utau} \equiv 0$$
and so $a_{u,\uline{t},v}=0$ for any $v \in \Lambda_H^*$ by \cite[Lemma 4.1]{F}. So, we just need to prove that $g_{u,\uline{t}}(\utau) \equiv 0$ for any $u,\uline{t}$. We fix a $\mathbb{Z}$-basis $v_1, \dots , v_n$ of $\Lambda_H$ and we set $\lambda_{i,u}:=e^{2 \pi i u \cdot v_i}$. In particular, we have $g_{u,\uline{t}}(\utau+v_i)=\lambda_{i,u} g_{u,\uline{t}}(\utau)$. We say that $\uline{t_1} < \uline{t_2}$ if $t_{1,i} < t_{2,i}$ for any $i=1, \dots,n$ and we define the following weak derivatives:
$$d_ig_u(\utau):=\frac{g_u(\utau+v_i)}{\lambda_{i,u}}-g_u(\utau),$$
$$d_{i,u}g(\utau):=\frac{g(\utau+v_i)}{\lambda_{i,u}}-g(\utau).$$
We notice that if there are $u \neq u'$ in $U$ with $\lambda_{i,u}=\lambda_{i,u'}$ for any $i$ we can consider $u=u'$ since we have $v_i \cdot (u-u') \in \mathbb{Z}$ for any $i$, so $u-u' \in \Lambda_H^*$ and, in particular, $\{v+u\}_{v \in \Lambda_H^*}=\{v+u'\}_{v \in \Lambda_H^*}$.
We fix $u_0 \in U$ and $\uline{t_0} \in S_{u_0}$ such that there is not $\uline{t} \in S_{u_0}$ with $\uline{t_0} < \uline{t}$ and we assume $|U| \geq 2$. Let $u' \in U \setminus \{ u_0\}$ and fix $i$ such that $\lambda_{i,u'} \neq \lambda_{i,u_0}$. We have
\begin{flalign*}
&& d_{i,u'}g(\utau) &= \frac{g(\utau + v_i)}{\lambda_{i,u'}} -g(\utau) &\\
&& &= \displaystyle \sum_{u \in U} \left( \frac{g_u(\utau + v_i)}{\lambda_{i,u'}} - g_u(\utau) \right) &\\
&& &=d_ig_{u'}(\utau)+ \displaystyle \sum_{u \in U \setminus \{u' \}} \left( \frac{\lambda_{i,u}}{\lambda_{i,u'}} \cdot \frac{g_u(\utau+v_i)}{\lambda_{i,u}} - \frac{\lambda_{i,u}}{\lambda_{i,u'}} \cdot g_u(\utau) + \left( \frac{\lambda_{i,u}}{\lambda_{i,u'}}-1\right)g_u(\utau) \right) &\\
&& &= d_ig_{u'}(\utau)+ \displaystyle \sum_{u \in U \setminus \{u'\}} \left( \frac{\lambda_{i,u}}{\lambda_{i,u'}}d_ig_u(\utau) + \left( \frac{\lambda_{i,u}}{\lambda_{i,u'}}-1\right)g_u(\utau) \right) &
\end{flalign*}
and we set
$$h(\utau):=\left(\frac{\lambda_{i,u_0}}{\lambda_{i,u'}}-1 \right)^{-1} d_{i,u'}g(\utau)=\displaystyle \sum_{u \in U}h_u(\utau)=\sum_{u \in U} \sum_{\uline{t} \in S'_u} \utau^{\uline{t}} h_{u,\uline{t}}(\utau).$$
Thanks to the previous computation we have:
\begin{itemize}
\item $h(\utau) \equiv 0$;
\item $h_{u'}(\utau)=\left(\frac{\lambda_{i,u_0}}{\lambda_{i,u'}}-1 \right)^{-1} d_i g_{u'}(\utau)$;
\item $h_{u_0,\uline{t_0}}(\utau)=g_{u_0,\uline{t_0}}(\utau)$;
\item there is not $\uline{t} \in S'_{u_0}$ with $\uline{t_0} < \uline{t}$.
\end{itemize}
Now, we can compute $d_{i,u'}h(\utau)$ and, since $d_i^t g_{u'}=0$ for some $t \in \mathbb{N}_{>0}$, if we iterate the process for a finite number of times we can assume that
$$g(\utau)=\displaystyle \sum_{u \in U \setminus \{u'\}} g_u(\utau)$$ 
and, proceding in the same way for any $u' \in U \setminus \{u_0\}$, we can also assume that
$$g(\utau)=g_{u_0}(\utau)=\sum_{\uline{t} \in S_{u_0}} \utau^{\uline{t}} g_{\uline{t},u_0}(\utau).$$
We notice that there exist $h_1, \dots, h_n \in \mathbb{N}_{>0}$ such that $h_i e_i \in \Lambda_H$ for any $i$. So, we set $\lambda'_{i,u}:=e^{2 \pi i u_i h_i}$ and we define the following weak derivative:
$$d'_{i,u}g(\utau):=\frac{g(\utau + h_i e_i)}{\lambda'_{i,u}}-g(\utau).$$
Since there is not $\uline{t} \in S_{u_0}$ with $\uline{t_0} < \uline{t}$, it is immediate to see that
$$\displaystyle \prod_{i=1}^n (d'_{i,u_0})^{t_{0,i}}g(\utau)=m \cdot g_{u_0,\uline{t_0}}(\utau) \equiv 0$$
with $m \in \mathbb{N}_{>0}$. Finally, $g_{u_0,\uline{t_0}} \equiv 0$ and by iterating the reasoning the statement is proved.
\end{proof}

\begin{definition}
We say that $g_{i,j}(\utau)$ is holomorphic at $\infty$ if
$$a_{u,\uline{t},v} \neq 0 \Rightarrow v_k+ \Re(u_k) \geq 0$$
for any complex coefficient in the Fourier expansion and for any $k$. We say that $G(\utau)$ is holomorphic at $\infty$ if $g_{i,j}(\utau)$ is holomorphic at $\infty$ for any $i,j$.
\end{definition}

\section{Spaces of matrix-valued Hilbert modular forms} \label{sec:4}

\begin{definition}
A \textbf{matrix-valued Hilbert modular form} is a matrix-valued Hilbert modular function $G: \mathcal{H}^n \rightarrow M_{r,c}(\mathbb{C})$ which is also holomorphic at $\infty$.
\end{definition}

Given $\alpha_1,\alpha_2 \in \mathbb{C}$ and two matrix-valued Hilbert modular forms $G_1,G_2: \mathcal{H}^n \rightarrow M_{r,c}(\mathbb{C})$ of weight $\uuline{k}$ with respect to a complex representation $\rho$ it is easy to see that $\alpha_1G_1+\alpha_2G_2$ is again a matrix-valued Hilbert modular form of the same weight and with respect to the same representation. We denote by $M^F_{\uuline{k}}(\rho)$ the complex vector space of matrix-valued Hilbert modular forms of weight $\uuline{k}$ with respect to $\rho$ and we set
$$M^F_c(\rho):=\displaystyle \bigoplus_{\uuline{k} \in M_{c,n}(\mathbb{Z})} M^F_{\uuline{k}}(\rho).$$

When $c=1$ we say that $G$ is a vector-valued Hilbert modular form. In particular, we notice that 
$$M^F_{\uuline{k}}(\rho)= \bigoplus_{i=1}^c M_{\uline{k_i}}^F(\rho), \:\:\:\:\:\:\:\: M^F_c(\rho)=(M^F_1(\rho))^c.$$
We can also endow $M^F_c(\rho)$ with the structure of a graded module over $M_1^F(\id_1)$ (it is the classical ring of Hilbert modular forms over $F$). In particular, if $g \in M^F_{\uline{\alpha}}(\id_1)$ and $G \in M^F_{\uuline{\beta}}(\rho)$ we set
$$(g \cdot G)(\uline{\tau}):=g (\utau) \cdot G(\utau)$$
and it is easy to see that $g \cdot G \in M^F_{\uuline{k}}(\rho)$ where $\uuline{k}=\begin{pmatrix} \uline{\alpha} \\ \dots \\ \uline{\alpha} \end{pmatrix} + \uuline{\beta}$.

\section{Poincar\'e series} \label{sec:5}

We build Poincar\'e series in order to prove that matrix-valued Hilbert modular forms exist. Let $F$ be a totally real field of degree $n$ over $\mathbb{Q}$, let $\rho: \Gamma_F \rightarrow \GL_r(\mathbb{C})$ be a complex representation and let $\uuline{k} \in M_{c,n}(\mathbb{Z})$ with $k_{i,j}>2$ for any $i,j$. We keep the notations for $\Lambda$, $T_i$ and $A_i$ as in \autoref{sec:3}. We fix a basis of $\mathbb{C}^r$ such that $A_1, \dots, A_n$ are in the form described in Proposition \ref{cm} with $t$ blocks and we denote by $\lambda_{i,j}$ the eigenvalue attached to the $j$-th block of $A_i$. We also set $S_i$ to be the inverse of the matrix obtained from $A_i$ by dividing each block by $\lambda_{i,j}$. We set
$$\uuline{\mu}:=\begin{pmatrix} \uline{\mu_1} \\ \dots \\ \uline{\mu_t} \end{pmatrix}=\begin{pmatrix} \mu_{1,1} & \dots & \mu_{1,n} \\ \vdots & & \vdots \\ \mu_{t,1} & \dots & \mu_{t,n} \end{pmatrix} \in M_{t,n}(\mathbb{C})$$
where $e^{2 \pi i \mu_{i,j}}=\lambda_{i,j}$. We also fix
$$\uuline{\nu}:=\begin{pmatrix} \uline{\nu_1} \\ \dots \\ \uline{\nu_t} \end{pmatrix}=\begin{pmatrix} \nu_{1,1} & \dots & \nu_{1,n} \\ \vdots & & \vdots \\ \nu_{t,1} & \dots & \nu_{t,n} \end{pmatrix} \in M_{t,n}(\mathbb{R})$$
where $\uline{\nu_i} \in \Lambda^*$ for any $i$. Finally, we set
$$P(\utau):=\exp \left(\displaystyle \sum_{i=1}^n \uline{\tau_i} \log (S_i) \right)$$
and we define the Poincar\'e series as follows:
$$G(\utau)= \displaystyle \sum_{M \in \Lambda \textbackslash \Gamma_F} \rho(M)^{-1} \diag( \dots, e^{2 \pi i (\uline{\nu_j} + \uline{\mu_j}) \cdot M \utau} \cdot \mathbb{I}_{m_j}, \dots) P(M \utau)^{-1} J_{\uuline{k}}(M,\utau)^{-1}$$
where $m_j$ is the dimension of the $j$-th block.
First of all we notice that the series is formally well-defined. In particular, for any $M \in \Gamma_F$ we have
\begin{flalign*}
&& &\rho(T_i M)^{-1} \diag( \dots, e^{2 \pi i (\uline{\nu_j} + \uline{\mu_j}) \cdot T_i M \utau} \cdot \mathbb{I}_{m_j}, \dots) P(T_i M \utau)^{-1} J_{\uuline{k}}(T_i M,\utau)^{-1}= &\\
&&  &\rho(M)^{-1} \rho(T_i)^{-1} \diag( \dots, e^{2 \pi i (\uline{\nu_j} + \uline{\mu_j}) \cdot (M \utau + v_i)} \cdot \mathbb{I}_{m_j}, \dots)  P(M \utau +v_i)^{-1} J_{\uuline{k}}(M,\utau)^{-1}= &\\
&& &\rho(M)^{-1} \rho(T_i)^{-1} \diag( \dots, e^{2 \pi i (\uline{\nu_j} + \uline{\mu_j}) \cdot M \utau} \cdot \mathbb{I}_{m_j}, \dots) \diag(\dots , \lambda_{i,j} \cdot \mathbb{I}_{m_j} , \dots) S_i^{-1} P(M \utau)^{-1} J_{\uuline{k}}(M,\utau)^{-1}= &\\
&& & \rho(M)^{-1} \diag( \dots, e^{2 \pi i (\uline{\nu_j} + \uline{\mu_j}) \cdot M \utau} \cdot \mathbb{I}_{m_j}, \dots) P(M \utau)^{-1} J_{\uuline{k}}(M,\utau)^{-1}.
\end{flalign*}

Furthermore, it is easy to see that $G$ satisfies the transformation law of modular functions:
\begin{flalign*}
&& G(\gamma \cdot \utau) J_{\uuline{k}}(\gamma,\utau)^{-1} &= \displaystyle \sum_{ M \in \Lambda \textbackslash \Gamma_F} \rho(M)^{-1} \diag( \dots, e^{2 \pi i (\uline{\nu_j} + \uline{\mu_j}) \cdot M \gamma \utau} \cdot \mathbb{I}_{m_j}, \dots) P(M \gamma \utau)^{-1} J_{\uuline{k}}(M,\gamma \utau)^{-1} J_{\uuline{k}}(\gamma,\utau)^{-1} &\\
&& &=\rho(\gamma) \displaystyle \sum_{ M \in \Lambda \textbackslash \Gamma_F} \rho(M \gamma)^{-1} \diag( \dots, e^{2 \pi i (\uline{\nu_j} + \uline{\mu_j}) \cdot M \gamma \utau} \cdot \mathbb{I}_{m_j}, \dots) P(M \gamma \utau)^{-1} J_{\uuline{k}}(M \gamma, \utau)^{-1}&\\
&& &=\rho(\gamma) G(\utau).
\end{flalign*}

Given a matrix $A=(a_{i,j})_{i,j} \in \GL_r(\mathbb{C})$ we define its norm by $\lVert A \rVert:=\max_{i,j} |a_{i,j}|$. We recall that $\lVert AB \rVert \leq \lVert A \rVert \lVert B \rVert$ for any $B \in \GL_r(\mathbb{C})$.

From now on, we assume that, considering the fixed basis of $\mathbb{C}^r$, the following conditions hold:
\begin{itemize}
\item the matrices in $\rho(\Lambda)$ are diagonal and their eigenvalues have absolute value equal to one (in particular, $\uuline{\mu} \in M_{t,n}(\mathbb{R})$;
\item $\lVert \rho(S) \rVert =1$ where $S=\begin{pmatrix} 0 & -1 \\ .1 & 0 \end{pmatrix}$.
\end{itemize}
We also assume $\uuline{\nu} \in M_{t,n}(\mathbb{R}_{>0})$.

We observe that the previous assumptions are satisfied, for example, if $\rho$ is a unitary representation (unitary commuting matrices are simultaneously unitarily diagonalizable).
We are going to prove that, under these assumptions, $G$ is a matrix-valued Hilbert modular form in $M_{\uuline{k}}^F(\rho)$.

\begin{remark}
Of course, we expect that $G$ or a proper variant is a matrix-valued Hilbert modular form even without the assumptions we made on $\rho$ but, at the moment, it is not clear to the author how to overcome some problems related to convergence. It seems that results in \cite{S} can be useful in this direction.
\end{remark}

\begin{proposition}
The series defining $G$ is absolutely convergent on $\mathcal{H}^n$.
\end{proposition}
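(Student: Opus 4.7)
I would prove uniform absolute convergence on an arbitrary compact $K\subset\mathcal{H}^n$ by estimating the four matrix factors of each summand and reducing to the convergence of a finite family of Hilbert--Eisenstein-type series. Under the running hypotheses, $\rho(T_i)$ is diagonal with unit-modulus entries and $\|\rho(S)\|=1$; since $\Gamma_F$ is generated by $T_1,\dots,T_n$ together with $S$, submultiplicativity of $\|\cdot\|$ yields a uniform bound on $\|\rho(M)^{-1}\|$ (identically $1$ in the unitary example). Choosing real representatives of the $\uline{\mu_j}$ so that $\uline{\nu_j}+\uline{\mu_j}\in\mathbb{R}^n_{>0}$ (possible because $\uuline{\nu}\in M_{t,n}(\mathbb{R}_{>0})$), the diagonal exponential factor has norm $\max_j e^{-2\pi(\uline{\nu_j}+\uline{\mu_j})\cdot\Im(M\utau)}\le 1$ on $\mathcal{H}^n$. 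The matrices $\log S_i$ are strictly lower triangular (the $S_i$ being unipotent), so $P(\utau)^{-1}=\exp(-\sum_i\tau_i\log S_i)$ is a matrix-valued polynomial of total degree at most $r-1$, giving $\|P(M\utau)^{-1}\|\le C_K\prod_j(1+|\sigma_j(M)\tau_j|)^{r-1}$ on $K$. Finally $\|J_{\uuline{k}}(M,\utau)^{-1}\|\le\sum_{i=1}^c\prod_{j=1}^n|\sigma_j(c)\tau_j+\sigma_j(d)|^{-k_{i,j}}$.

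\textbf{Absorbing the polynomial.} The key step is dominating the polynomial factor from $P(M\utau)^{-1}$ by powers of the automorphy factor. For each coset of $\Lambda\backslash\Gamma_F$ with $c\neq 0$ I would pick a representative with $(\sigma_j(a),\sigma_j(b))$ in a fixed fundamental domain of the $(c,d)$-translation lattice, using the $\Lambda$-action $(a,b)\mapsto(a+\lambda c,b+\lambda d)$ together with a Minkowski-type bound. Combined with the identity $M\tau_j=\sigma_j(a)/\sigma_j(c)-\bigl(\sigma_j(c)(\sigma_j(c)\tau_j+\sigma_j(d))\bigr)^{-1}$, this yields $|\sigma_j(M)\tau_j|\le C_K(1+|\sigma_j(c)\tau_j+\sigma_j(d)|^{-1})$ on $K$, so the whole summand is majorised by a finite linear combination of products $\prod_j|\sigma_j(c)\tau_j+\sigma_j(d)|^{-k'_{i,j}}$ with $k'_{i,j}\ge k_{i,j}>2$. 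The corresponding sum over cosets converges absolutely by the classical Hilbert--Eisenstein convergence.

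\textbf{The $c=0$ cosets and main obstacle.} The cosets with $c=0$, parametrised by $\mathcal{O}_F^\times$, are not controlled by the automorphy factor since $|N_{F/\mathbb{Q}}(u)|=1$ gives no decay. Here the exponential factor is essential: on such a coset $\Im(M\tau_k)=\sigma_k(u)^2\Im\tau_k$, so the strict positivity $\uuline{\nu}\in M_{t,n}(\mathbb{R}_{>0})$ produces $e^{-2\pi\sum_k(\nu_{j,k}+\mu_{j,k})\sigma_k(u)^2 y_k}$, which decays super-polynomially as any $|\sigma_k(u)|$ grows. Combining this with Dirichlet's unit theorem (which identifies $\mathcal{O}_F^\times$ modulo torsion with a lattice of rank $n-1$) gives absolute summability over the unit group. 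The main obstacle is precisely this double control: the $c\neq 0$ regime must absorb the degree-$(r-1)$ polynomial growth of $P(M\utau)^{-1}$ into the automorphy factor without crossing the tight threshold $k_{i,j}>2$, while the $c=0$ regime has no contribution from $J$ and relies entirely on the strict positivity of $\uuline{\nu}$ to produce convergence in the unit direction.
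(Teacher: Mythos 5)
Your treatment of three of the four factors matches the paper's proof: you bound $\lVert\rho(M)^{-1}\rVert$ using the generators $S,T_1,\dots,T_n$ and submultiplicativity, you bound the diagonal exponential by $1$ after arranging $\uline{\nu_j}+\uline{\mu_j}$ to be totally positive, and you reduce everything to the convergence of classical Hilbert Poincar\'e/Eisenstein series of weights $>2$ --- the paper does exactly this, except that it simply cites \cite[$\mathsection 1.13$]{GA} for the convergence over $\Lambda\backslash\Gamma_F$ rather than redoing the $c=0$ versus $c\neq 0$ case split (your discussion of the unit cosets is essentially what is inside that citation). The real divergence is in how you handle $P(M\utau)^{-1}$, and there your proposal both misses the intended shortcut and rests on a false estimate. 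Under the standing assumption that the matrices in $\rho(\Lambda)$ are diagonal, each block of $A_i$ equals $\lambda_{i,j}\mathbb{I}_{m_j}$, hence $S_i=\mathbb{I}_r$, $\log S_i=0$ and $P(\utau)\equiv\mathbb{I}_r$; the paper states this explicitly, so the ``main obstacle'' you identify does not arise at all, and the entire ``absorbing the polynomial'' section is vacuous.

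Taken on its own terms, that section also contains a genuine error. From the identity $M\tau_j=\sigma_j(a)/\sigma_j(c)-\bigl(\sigma_j(c)(\sigma_j(c)\tau_j+\sigma_j(d))\bigr)^{-1}$ one gets
$$\bigl|M\tau_j-\sigma_j(a)/\sigma_j(c)\bigr|=|\sigma_j(c)|^{-1}\,|\sigma_j(c)\tau_j+\sigma_j(d)|^{-1},$$
and your claimed bound $|M\tau_j|\le C_K\bigl(1+|\sigma_j(c)\tau_j+\sigma_j(d)|^{-1}\bigr)$ silently discards the factor $|\sigma_j(c)|^{-1}$. For $n\ge 2$ a nonzero $c\in\mathcal{O}_F$ only satisfies $\prod_j|\sigma_j(c)|\ge 1$; an individual embedding $|\sigma_j(c)|$ can be arbitrarily close to $0$, so no uniform $C_K$ exists and the majorisation by $\prod_j|\sigma_j(c)\tau_j+\sigma_j(d)|^{-k'_{i,j}}$ does not follow. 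If the diagonality assumption were dropped, so that $P$ really were a nonconstant polynomial, this step would have to be repaired (the paper itself defers that general case to future work). One small point in your favour: your explicit choice of real representatives $\uline{\mu_j}$ with $\uline{\nu_j}+\uline{\mu_j}$ componentwise positive is more careful than the paper's final display, which bounds the exponential factor by $|e^{2\pi i\uline{\nu_j}\cdot M\utau}|$ alone and thereby implicitly assumes $\uline{\mu_j}\cdot\Im(M\utau)\ge 0$.
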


\begin{proof}
Thanks to the main theorem of \cite{V}, we know that $\Gamma_F$ is generated by the family $\{S,T_1, \dots , T_n\}$. Since we also know, by assumption, that $\lVert \rho(S) \rVert = 1$ and $\lVert A_i\rVert = 1$ we have that $\lVert \rho(M) \rVert \leq 1$ for any $M \in \Gamma_F$. We also notice that
$$P(\utau)=\exp \left(\displaystyle \sum_{i=1}^n \uline{\tau_i} \log (\mathbb{I}_r) \right)=\mathbb{I}_r.$$
Then we have
\begin{flalign*}
&& \lVert G(\utau) \rVert & \leq \displaystyle \sum_{M \in \Lambda \textbackslash \Gamma_F} \left\lVert \diag( \dots, e^{2 \pi i (\uline{\nu_j} + \uline{\mu_j}) \cdot M \utau} \cdot \mathbb{I}_{m_j}, \dots)   J_{\uuline{k}}(M,\utau)^{-1} \right\rVert &\\
&& & \leq \displaystyle \sum_{j=1}^t \displaystyle \sum_{i=1}^c \displaystyle \sum_{M \in \Lambda \textbackslash \Gamma_F} \left| e^{2 \pi i \uline{\nu_j} \cdot M \utau} j_{\uline{k_i}}(M,\utau)^{-1}\right| < \infty
\end{flalign*}
where the last sum is convergent by \cite[$\mathsection 1.13$]{GA}.
\end{proof}

In order to finish, we notice that any component function of $G$ is of the form
$$g(\utau)= \displaystyle \sum_{M \in \Lambda \textbackslash \Gamma_F} a_M e^{2 \pi i (\uline{\nu_j} + \uline{\mu_j}) \cdot M \utau} j_{\uline{k_i}}(M, \utau)^{-1}$$
where $a_M \in \mathbb{C}$ depends only on $M$ and $|a_M| \leq 1$. Thanks to this we have that $G$ is holomorphic since the Poincar\'e series defined in \cite[$\mathsection 1.13$]{GA} are absolutely-uniformly convergent on compact subsets of $\mathcal{H}^n$. We fix $\utau=\lambda e_k$ for $\lambda \in \mathbb{R}_{>0}$ and $k=1, \dots ,n$ and we have
$$\lim_{ \lambda \to +\infty} \left| \displaystyle \sum_{M \in \Lambda \textbackslash \Gamma_F} a_M e^{2 \pi i (\uline{\nu_j} + \uline{\mu_j}) \cdot M \utau} j_{\uline{k_i}}(M, \utau)^{-1} \right| \leq \lim_{\lambda \to +\infty} \sum_{M \in \Lambda \textbackslash \Gamma_F} \left| e^{2 \pi i (\uline{\nu_j}) \cdot M \utau} j_{\uline{k_i}}(M,\utau)^{-1} \right|=0$$
(see, for example, \cite[page 53]{GA}) and we can conclude that
$$\lim_{ \lambda \to + \infty} g(\utau)=0.$$

\appendix

\section{Commutative algebra of $M_c^F(\rho)$} \label{sec:A}

\begin{proposition} \label{cmds}
Let $R$ be a Noetherian commutative ring and let $M_1,M_2$ be finitely generated Cohen-Macaulay modules over $R$. Then $M_1 \oplus M_2$ is Cohen-Macaulay if and only if $\dim_R(M_1)=\dim_R(M_2)$.
\end{proposition}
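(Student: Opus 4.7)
The plan is to reduce the question to a Noetherian local ring and then exploit the standard formulas for Krull dimension and depth of a direct sum. Recall that for finitely generated modules $M,N$ over a Noetherian local ring $(R,\mathfrak{m})$ with residue field $k = R/\mathfrak{m}$ one has
\[
\dim(M \oplus N) = \max(\dim M, \dim N), \qquad \depth(M \oplus N) = \min(\depth M, \depth N).
\]
The first identity follows from $\mathrm{Supp}(M \oplus N) = \mathrm{Supp}(M) \cup \mathrm{Supp}(N)$, and the second from the natural splitting $\mathrm{Ext}^i_R(k, M \oplus N) \cong \mathrm{Ext}^i_R(k,M) \oplus \mathrm{Ext}^i_R(k,N)$ together with the characterization $\depth L = \min\{i : \mathrm{Ext}^i_R(k,L) \neq 0\}$.

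First I would invoke the fact that being Cohen-Macaulay is a local property: $M$ is CM if and only if $M_\mathfrak{n}$ is CM over $R_\mathfrak{n}$ for every maximal ideal $\mathfrak{n}$ of $R$. Since localization commutes with finite direct sums, this reduces the problem to the case of a Noetherian local ring, where the two formulas above apply directly.

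The sufficiency direction is then immediate: if $d := \dim M_1 = \dim M_2$, Cohen-Macaulayness of each $M_i$ gives $\depth M_i = d$, whence
\[
\depth(M_1 \oplus M_2) = \min(d,d) = d = \max(d,d) = \dim(M_1 \oplus M_2),
\]
so $M_1 \oplus M_2$ is CM. For the necessity direction, assume $M_1 \oplus M_2$ is CM, so that $\min(\depth M_1, \depth M_2) = \max(\dim M_1, \dim M_2)$. Combining this with the universal inequalities $\depth M_i \leq \dim M_i \leq \max(\dim M_1, \dim M_2)$ for $i=1,2$ forces equality throughout, and in particular $\dim M_1 = \dim M_2$.

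The most delicate point I expect is the global-to-local reduction: one has to verify that a failure of Cohen-Macaulayness forced by distinct global dimensions actually manifests at some maximal ideal in the common support of $M_1$ and $M_2$, rather than being invisible because the modules happen to be supported on disjoint portions of $\Spec R$. In the graded/$*$-local setting in which this proposition is intended to be applied (to $M_c^F(\rho)$ viewed as a module over $M_1^F(\id_1)$), the unique $*$-maximal graded ideal witnesses the global dimensions and this bookkeeping is automatic.
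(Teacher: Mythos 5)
Your argument is essentially the paper's own proof: reduce to the local case, then combine $\dim(M_1\oplus M_2)=\max(\dim M_1,\dim M_2)$ with $\depth(M_1\oplus M_2)=\min(\depth M_1,\depth M_2)$ and squeeze using $\depth M_i\le\dim M_i$. The paper derives the dimension formula from $\Ann_R(M_1\oplus M_2)=\Ann_R(M_1)\cap\Ann_R(M_2)$ rather than from supports and does not justify the depth formula, but those are cosmetic differences; the local computation is correct and complete in both versions.

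The ``delicate point'' you flag at the end, however, is a genuine gap --- and it is one the paper shares, since its proof opens by asserting without justification that one may assume $R$ local. With the standard convention that a finitely generated module $M$ over a Noetherian ring is Cohen--Macaulay when $M_{\mathfrak{m}}$ is Cohen--Macaulay over $R_{\mathfrak{m}}$ for every maximal ideal $\mathfrak{m}$ in its support, the statement is false in both directions for non-local $R$, precisely because of the disjoint-support phenomenon you describe. For ``only if'': take $R=k[x,y]$, $M_1=R/(x,y)$, $M_2=R/(x-1)$; both are Cohen--Macaulay with disjoint supports, so every localization of $M_1\oplus M_2$ is a localization of a single summand and $M_1\oplus M_2$ is Cohen--Macaulay, yet $\dim M_1=0\ne 1=\dim M_2$. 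For ``if'': take $R=k[x,y,z]$, $M_1=R/(x)\oplus R/(x-1,y,z)$ and $M_2=R/(y)$; both are Cohen--Macaulay of global dimension $2$, but at $\mathfrak{m}=(x-1,y,z)$ the sum has depth $0$ and dimension $2$. So the reduction cannot be ``repaired''; the proposition needs the hypothesis that $R$ is local (or graded with a unique graded maximal ideal and all modules graded, as in the intended application to $M_c^F(\rho)$ over $M_1^F(\id_1)$, where that ideal lies in the support of every nonzero graded module and computes both global dimension and depth). Your instinct about where the argument is fragile is exactly right; the fix is to strengthen the hypotheses of the statement, not to tighten the bookkeeping in the proof.
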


\begin{proof}
We just need to prove the statement when $R$ is a local Noetherian commutative ring. We notice that
\begin{flalign*}
&& \dim_R(M_1 \oplus M_2) &=\dim_R(R/\Ann_R(M_1 \oplus M_2)) &\\
&& &=\dim_R(R/(\Ann_R(M_1) \cap \Ann_R(M_2))) &\\
&& &=\max \{\dim_R(R/\Ann_R(M_1)), \dim_R(R/\Ann_R(M_2)) \} &\\
&& &= \max \{\dim_R(M_1),\dim_R(M_2) \}.
\end{flalign*}
The third equality follows from the basic fact that any prime ideal of $R$ which contains $\Ann_R(M_1) \cap \Ann_R(M_2)$ must contain either $\Ann_R(M_1)$ or $\Ann_R(M_2)$.
On the other hand we have
\begin{flalign*}
\depth_R(M_1 \oplus M_2)=\min \{ \depth_R(M_1), \depth_R(M_2)\}.
\end{flalign*}
Finally $M_1 \oplus M_2$ is Cohen-Macaulay if and only if $\dim_R(M_1 \oplus M_2)=\depth_R(M_1 \oplus M_2)$, i.e. if and only if
$$\max \{ \dim_R(M_1),\dim_R(M_2) \} = \min \{ \depth_R(M_1),\depth_R(M_2) \}.$$
Since $M_1$ and $M_2$ are Cohen-Macaulay it holds if and only if
$$\max \{ \dim_R(M_1),\dim_R(M_2) \} = \min \{ \dim_R(M_1),\dim_R(M_2) \}$$
and the statement follows.
\end{proof}

Let $r,c \geq 1$ be integers, let $F$ be a totally real field and let $\rho:\Gamma_F \rightarrow \GL_r(\mathbb{C})$ be a complex representation.

\begin{proposition}
$M^F_1(\id_1)$ is a Noetherian ring.
\end{proposition}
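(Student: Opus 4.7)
The plan is to prove that $M^F_1(\id_1)$ is finitely generated as a $\mathbb{C}$-algebra, after which the Noetherian property follows immediately from Hilbert's basis theorem.

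First I would unpack the definition: $M^F_1(\id_1) = \bigoplus_{\uline{k} \in M_{1,n}(\mathbb{Z})} M^F_{\uline{k}}(\id_1)$ coincides with the usual $\mathbb{Z}^n$-graded ring of holomorphic Hilbert modular forms on $\mathcal{H}^n$ for $\Gamma_F$ with trivial multiplier. A preliminary clean-up shows that $M^F_{\uline{k}}(\id_1) = 0$ whenever some component $k_i < 0$: this follows from the Koecher principle in dimension $n \geq 2$ together with standard growth estimates, and for $n = 1$ it reduces to the classical vanishing of negative-weight holomorphic elliptic modular forms. Hence the sum effectively ranges over $\uline{k} \in \mathbb{Z}_{\geq 0}^n$, and each graded piece is finite-dimensional over $\mathbb{C}$ by standard dimension formulas.

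The main ingredient is the Baily--Borel theorem, which states that the graded ring of holomorphic automorphic forms for an arithmetic group acting on a hermitian symmetric domain is a finitely generated $\mathbb{C}$-algebra. Applied to $\Gamma_F$ acting on $\mathcal{H}^n$ (see, for instance, Freitag's monograph on Hilbert modular forms, or van der Geer's book on Hilbert modular surfaces for the case $n = 2$), this yields finite generation of the parallel-weight subring $\bigoplus_{k \geq 0} M^F_{(k,\ldots,k)}(\id_1)$. To upgrade to the full $\mathbb{Z}^n$-graded ring $M^F_1(\id_1)$, I would either invoke the multi-graded variant of the theorem or argue that $M^F_1(\id_1)$ is finitely generated as a module over its parallel-weight subring -- which is true because the Baily--Borel compactification carries several compatible ample line bundles whose total ring of sections is finitely generated.

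With finite generation over $\mathbb{C}$ established, Hilbert's basis theorem immediately gives Noetherianity. The main obstacle in the plan is precisely this multi-grading step: everything else is either formal or a direct application of classical results, but the passage from parallel-weight to fully multi-graded finite generation requires either a clean reference or a short geometric argument using the Satake/Baily--Borel compactification of $\Gamma_F \backslash \mathcal{H}^n$.
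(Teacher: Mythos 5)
Your proposal follows essentially the same route as the paper: establish that $M^F_1(\id_1)$ is a finitely generated $\mathbb{C}$-algebra and then conclude Noetherianity from Hilbert's basis theorem. The paper simply cites a reference (Thomas--Vasquez) for the finite generation, whereas you sketch how one would derive it from Baily--Borel and correctly flag the parallel-weight versus multi-graded subtlety as the point needing care; the underlying argument is the same.
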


\begin{proof}
We know that $M^F_1(\id_1)$ is a finitely generated algebra over $\mathbb{C}$ (see \cite[page 139]{TV}). Then it is Noetherian.
\end{proof}

It is natural to formulate the following conjecture since it is known that it holds if $F=\mathbb{Q}$.

\begin{conjecture} \label{ccm}
$M^F_c(\rho)$ is a finitely generated Cohen-Macaulay module over $M^F_1(\id_1)$.
\end{conjecture}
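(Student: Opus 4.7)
The plan is first to reduce the conjecture to the vector-valued case $c=1$. The paper already establishes $M_c^F(\rho) = (M_1^F(\rho))^c$ as $M_1^F(\id_1)$-modules. Since all $c$ summands coincide, they have the same $M_1^F(\id_1)$-dimension, so Proposition \ref{cmds}, applied inductively, reduces the Cohen-Macaulay property for $M_c^F(\rho)$ to that of $M_1^F(\rho)$. Finite generation is similarly preserved under finite direct sums. Hence it suffices to prove that $M_1^F(\rho)$ is a finitely generated, Cohen-Macaulay module over $R := M_1^F(\id_1)$.

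For finite generation, I would exploit the Poincar\'e series constructed in \autoref{sec:5}, which already provide a controlled supply of matrix-valued forms with prescribed unipotent data $\uuline{\mu}$ and translation data $\uuline{\nu}$. The strategy would be to fix a large enough finite set of weights $\uline{k}_\alpha$ and of parameters $(\uline{\mu}_j,\uline{\nu}_j)$ and to show, via bounds on the Fourier coefficients of the polynomial expansion proved in \autoref{sec:3}, that the $R$-span of the Poincar\'e series eventually exhausts every weight space $M_{\uline{k}}^F(\rho)$. This mirrors the Marks-Mason argument in the classical case; however, since the Poincar\'e series of \autoref{sec:5} are so far only constructed for unitary $\rho$, one would also need to extend the construction (along the lines alluded to in the remark after \autoref{sec:5}, using \cite{S}) or instead invoke Hilbert cusp-form growth bounds to get $R$-module generation abstractly.

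For the Cohen-Macaulay property, I would adapt the classical freeness-type arguments. Because $R = M_1^F(\id_1)$ is a graded finitely generated $\mathbb{C}$-algebra (by the proposition cited from \cite{TV}) and is known to be a normal Cohen-Macaulay domain of Krull dimension $n+1$, the natural approach is to choose a homogeneous system of parameters $\theta_1,\dots,\theta_{n+1}$ of $R$ and to show that it remains a regular sequence on $M_1^F(\rho)$. Equivalently, in local-cohomology language at the irrelevant ideal $\mathfrak{m}$, the goal is to establish the vanishing $H^i_{\mathfrak{m}}(M_1^F(\rho))=0$ for $i<n+1$. Via the graded Nakayama lemma this is in turn equivalent to freeness of $M_1^F(\rho)/(\theta_1,\dots,\theta_{n+1})$ as a $\mathbb{C}$-vector space of the expected rank, which can be attacked by combining the polynomial Fourier expansion of Section~\ref{sec:3} with a dimension count on $M_{\uline{k}}^F(\rho)$ of Eichler--Selberg type on the toroidal compactification of the Hilbert modular variety.

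The main obstacle, as in the hints already given by the author, is the jump from the classical setting to the Hilbert setting. Over $\mathbb{Q}$ two very convenient facts are used: $\SL_2(\mathbb{Z})$ is (virtually) free, and $M(\SL_2(\mathbb{Z})) = \mathbb{C}[E_4,E_6]$ is a polynomial ring, so that any finitely generated graded module over it is Cohen-Macaulay as soon as its depth meets the Krull dimension $2$. In the Hilbert setting $R$ is no longer a polynomial ring, and $\Gamma_F$ has a much richer presentation, so one cannot bound the depth of $M_1^F(\rho)$ by an ad hoc argument. Producing an explicit regular sequence — or, equivalently, the required local-cohomology vanishing — is therefore the genuinely new difficulty, and it is the step where a substantial input (likely from the geometry of Hilbert modular varieties or from the theory in \cite{S}) would be unavoidable.
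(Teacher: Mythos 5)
The statement you were asked to prove is labelled a \emph{conjecture} in the paper: the author offers no proof of it in general, only a proof of the special case $F=\mathbb{Q}$, which is obtained by quoting Gottesman's results (finite generation, Cohen--Macaulayness and constancy of the Krull dimension of the weight-graded pieces of $M^{\mathbb{Q}}_1(\rho)$) and then assembling the direct sum via Proposition \ref{cmds}. Your opening reduction is sound and coincides exactly with that mechanism: since $M^F_c(\rho)\cong (M^F_1(\rho))^c$ as $M^F_1(\id_1)$-modules, the $c$ summands are literally the same module, so they share dimension and Proposition \ref{cmds} (applied inductively) reduces everything to $c=1$; finite generation also passes through finite direct sums. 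Up to that point you are on firm ground and in step with the paper.

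Beyond that, however, what you have written is a research programme rather than a proof, and the two substantive claims are left unestablished. For finite generation, the Poincar\'e series of \autoref{sec:5} are only constructed (and only shown to converge) under the unitarity-type hypotheses on $\rho$, and even in that case the paper proves nothing about their spanning the weight spaces; you acknowledge this, but it means the step is missing, not merely deferred. For the Cohen--Macaulay property, your argument leans on the assertion that $R=M^F_1(\id_1)$ is ``known to be a normal Cohen--Macaulay domain of Krull dimension $n+1$.'' Note that $R$ being a Cohen--Macaulay ring is precisely the $\rho=\id_1$, $r=c=1$ instance of the conjecture, so this assumes a case of the statement to be proved; moreover the paper only records that $R$ is a finitely generated $\mathbb{C}$-algebra, and the Krull dimension of the full multigraded ring (graded over all of $M_{1,n}(\mathbb{Z})$, not just parallel weights) is not computed anywhere. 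The proposed local-cohomology vanishing $H^i_{\mathfrak{m}}(M^F_1(\rho))=0$ for $i$ below the dimension is exactly the content of the conjecture restated in different language, with no new leverage supplied. In short: the reduction to $c=1$ is correct and matches the paper, but the core of the statement remains open in your write-up just as it does in the paper, so this cannot be accepted as a proof of the conjecture.
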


\begin{proposition}
Conjecture \ref{ccm} is true when $F=\mathbb{Q}$.
\end{proposition}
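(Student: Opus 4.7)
The plan is to reduce to the classical ($c=1$) case by combining the product decomposition already observed in Section \ref{sec:4} with Proposition \ref{cmds}. Write $R := M^{\mathbb{Q}}_1(\id_1) = \mathbb{C}[E_4,E_6]$, which is a polynomial ring (hence Cohen-Macaulay and Noetherian). By the decomposition recorded at the end of Section \ref{sec:4}, one has an isomorphism of $R$-modules
$$M^{\mathbb{Q}}_c(\rho) \;\cong\; \bigl(M^{\mathbb{Q}}_1(\rho)\bigr)^c.$$
Thus it suffices to prove (i) that $M^{\mathbb{Q}}_1(\rho)$ is a finitely generated Cohen-Macaulay $R$-module, and (ii) that a finite direct sum of copies of a Cohen-Macaulay $R$-module is again Cohen-Macaulay and finitely generated.

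For (i), I would invoke the known structural theorem for vector-valued modular forms on $\SL_2(\mathbb{Z})$ due to Marks and Mason (and refined in subsequent work of Candelori--Franc, Gannon, and others): for any finite-dimensional complex representation $\rho$ of $\SL_2(\mathbb{Z})$ in the class under consideration, $M^{\mathbb{Q}}_1(\rho)$ is a free $\mathbb{C}[E_4,E_6]$-module of finite rank (the rank equals $\dim\rho$, up to the usual conventions on parity/weight). Freeness over $R$ immediately gives finite generation and the Cohen-Macaulay property, settling (i).

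For (ii), I would apply Proposition \ref{cmds} inductively. Since every summand in the decomposition is a copy of the same module $M^{\mathbb{Q}}_1(\rho)$, the hypothesis $\dim_R(M_1) = \dim_R(M_2)$ in Proposition \ref{cmds} is automatically satisfied at every step, so the Cohen-Macaulay property is preserved under taking the $c$-fold direct sum. Finite generation is obviously preserved under finite direct sums, completing the argument.

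The main obstacle I anticipate is matching the class of representations $\rho$ allowed in the present paper with the hypotheses under which the freeness statement of Marks--Mason is proved. In particular, since the definition of matrix-valued Hilbert modular form here allows logarithmic Fourier expansions (of the form $\sum_u \sum_{\underline{t}} \utau^{\underline{t}} g_{u,\underline{t}}(\utau)$), one may actually need a version of the Marks--Mason theorem for logarithmic vector-valued modular forms rather than the strictly holomorphic setting, or to restrict $\rho$ to representations for which the two notions coincide. Making this matching precise (and, if necessary, citing or proving the appropriate generalization) is the delicate step; everything else is formal.
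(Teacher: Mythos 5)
Your proposal is correct and follows essentially the same route as the paper: reduce to the $c=1$ case via the decomposition $M^{\mathbb{Q}}_c(\rho) \cong (M^{\mathbb{Q}}_1(\rho))^c$, invoke the known structure theory of vector-valued modular forms on $\SL_2(\mathbb{Z})$ for the summand, and conclude with Proposition \ref{cmds}. The only differences are cosmetic: the paper cites finite generation and Cohen--Macaulayness (together with the Krull dimension being $2$) directly from the literature rather than deducing them from the Marks--Mason freeness theorem, and your remark that the equal-dimension hypothesis of Proposition \ref{cmds} is automatic for identical summands slightly streamlines the paper's explicit appeal to that dimension computation.
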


\begin{proof}
By \cite[Theorem 1.2]{G} and \cite[Theorem 1.3]{G} we know that $M^\mathbb{Q}_1(\rho)$ is a finitely generated Cohen-Macaulay module over $M^\mathbb{Q}_1(\id_1)$. By \cite[Lemma 3.2]{G} we also know that the Krull dimension of all these modules is $2$. Then we can conclude by Proposition \ref{cmds}.
\end{proof}

\bibliographystyle{amsplain}
\bibliography{Database}

\end{document}